\documentclass[reqno]{amsart}
\usepackage[T1]{fontenc}
\usepackage{dsfont}
\usepackage{mathrsfs}
\usepackage[colorlinks]{hyperref}
\usepackage{xcolor}
\usepackage[a4paper,asymmetric]{geometry}
\usepackage{mathscinet}
\usepackage{latexsym}
\usepackage{amsthm}
\usepackage{amssymb}
\usepackage{amsfonts}
\usepackage{amsmath}
\usepackage{longtable}
\usepackage{graphicx}
\usepackage{multirow}
\usepackage{multicol}

\setcounter{MaxMatrixCols}{10}

\newtheorem{theorem}{Theorem}[section]
\newtheorem{thm}[theorem]{Theorem}

\newtheorem{lem}[theorem]{Lemma}
\newtheorem{remark}[theorem]{Remark}
\newtheorem{proposition}[theorem]{Proposition}

\newtheorem{corollary}[theorem]{Corollary}

\theoremstyle{definition}

\newtheorem{defn}[theorem]{Definition}

\theoremstyle{remark}
\numberwithin{equation}{section}

 \DeclareMathAlphabet{\mathpzc}{OT1}{pzc}{m}{it}
 \DeclareMathAlphabet{\mathsfsl}{OT1}{cmss}{m}{sl}

  \newcommand{\FH}{\mathfrak{H}}

\newcommand{\dif}{\mathrm{d}}

\newcommand{\abs}[1]{\left\vert#1\right\vert}
\newcommand{\set}[1]{\left\{#1\right\}}

\newcommand{\norm}[1]{\left\Vert#1\right\Vert}

\newcommand{\E}{\mathbb{E}}

 \newcommand{\Rnum}{\mathbb{R}}

 \newcommand{\innp}[1]{\langle {#1}\rangle}
\newcommand{\Be}{\begin{equation}}
\newcommand{\Ee}{\end{equation}}
\newcommand{\Bs}{\begin{split}}
\newcommand{\Es}{\end{split}}
\newcommand{\Bes}{\begin{equation*}}
\newcommand{\Ees}{\end{equation*}}
\newcommand{\BT}{\begin{thm}}
\newcommand{\ET}{\end{thm}}
\newcommand{\Bp}{\begin{proof}}
\newcommand{\Ep}{\end{proof}}
\newcommand{\BL}{\begin{lem}}
\newcommand{\EL}{\end{lem}}
\newcommand{\BP}{\begin{proposition}}
\newcommand{\EP}{\end{proposition}}
\newcommand{\BC}{\begin{corollary}}
\newcommand{\EC}{\end{corollary}}
\newcommand{\BR}{\begin{remark}}
\newcommand{\ER}{\end{remark}}
\newcommand{\BD}{\begin{defn}}
\newcommand{\ED}{\end{defn}}
\newcommand{\BI}{\begin{itemize}}
\newcommand{\EI}{\end{itemize}}

\allowdisplaybreaks

\begin{document}
\title[Berry-Ess\'{e}en bound of fBm-OU processes]{Berry-Ess\'{e}en bound for  the Parameter Estimation  of Fractional Ornstein-Uhlenbeck Processes
}
\author[Y. Chen]{Yong CHEN}
 \address{College of Mathematics and Information Science, Jiangxi Normal University, Nanchang, 330022, Jiangxi, China }
\email{zhishi@pku.org.cn; chenyong77@gmail.com}
\author[N. Kuang]{Nenghui KUANG}
 \address{School of Mathematics and Computing Science, Hunan University of Science and Technology, Xiangtan, 411201, Hunan, China}
\email{knh1552@163.com}
 \author[Y. Li]{Ying LI}
 \address{School of Mathematics and Computional Science, Xiangtan University, Xiangtan, 411105, Hunan, China}
 \email{liying@xtu.edu.cn}
\begin{abstract}
For an Ornstein-Uhlenbeck process driven by fractional Brownian motion with Hurst index $H\in [\frac12,\frac34]$, we show the Berry-Ess\'{e}en bound of the least squares estimator of the drift parameter based on the continuous-time observation. We use an approach based on Malliavin calculus given by Kim and Park \cite{kim 3}.\\
{\bf Keywords:} Berry-Ess\'{e}en bound; Fourth Moment theorems; fractional Ornstein-Uhlenbeck process; Malliavin calculus.\\
{\bf MSC 2000:} 60H07; 60F25; 62M09.
\end{abstract}
\maketitle

\section{ Introduction}\label{sec 03}  Parameter estimation questions for stochastic differential equations driven by
fractional Brownian motion (fBm), whose solutions are observed continuously or at discrete time instants, have recently experienced intensive development. 
A simplest example is the 1-dimensional fractional Ornstein-Uhlenbeck (fOU) process: 
\begin{equation}\label{fOU}
\mathrm{d} X_t= -\theta X_t\mathrm{d} t+\sigma^2 \mathrm{d}B^{H}_t,\quad X_0=0,\quad 0\le t\le T,
\end{equation} where $B^{H}_t$ is a fBm with Hurst parameter $H\in(0,\,1)$. Since the
Hurst parameter $H$ and the volatility parameter $\sigma^2$ can be estimated by quadratic variation methods or using regression methods, $H$ and $\sigma^2$ are often assumed to be fixed and known and for simplicity we take $\sigma^2=1$. 


Based on the continuous-time observation of the trajectory of the process $X$, the maximum likelihood estimator (MLE) was studied in \cite{KL 02, TV 07, BK 10, BCS 11} when $H\in (0,\,1)$. Typically, the MLE is not easily computable.
Afterwards, a least-squares (LS) approach was proposed in \cite{hn}, which is given by a ratio of two Gaussian functionals:
\begin{align}
\hat{\theta}_T&=-\frac{\int_0^T X_t\mathrm{d}X_t}{\int_0^T X_t^2\mathrm{d} t}=\theta-\frac{\int_0^T X_t\mathrm{d}B^{H}_t}{\int_0^T X_t^2\mathrm{d} t},\label{hattheta}
\end{align}where the integral with respect to $B^H$ is interpreted in the Skorohod sense (or say a divergence-type integral).
The strong consistency and asymptotic normality of $\hat{\theta}_T$ are shown for $H\in [\frac12,\,\frac34)$ in \cite{hn}, and recently, this finding is extended to the case of $H\in (0,\frac34]$ in \cite{hnz}.

Skorohod integrals are difficult to use in practice.  When $H> \frac12$, we can translate the above divergence-type integral into a Young integral modulo a correction term. But it raises a problem that the correction term relies on the unknown
parameter $\theta$ that is being estimated, which makes $\hat{\theta}_T$ cannot be computed from the trajectory of $X$. When $H< \frac12$, it is worse since  we can not  reinterpret the Skorohod integral as a Young integral which is a pathwise notion \cite{BV 17}. Therefore, strictly speaking, $\hat{\theta}_T$ is not a real estimator when $H\in (0,\frac12)\cup(\frac12,1)$. 

The above problems are often called as questions of measurability of the estimator, which motivate some authors to study more practical parameter estimates based on discrete observations (e.g. \cite{TV 07, HS 13, Neu Tin 14, BV 17, EOESV17, SV 18}). Most scholars working in this direction try to discretize the continuous-time estimators, e.g., \cite{ES 13, HS 13, Neu Tin 14, SV 18}. Recently, in \cite{ EOESV17, EV 16}, the authors show that this may not be the most fruitful idea, i.e., they find out that to discretize the continuous-time estimators will lose the estimator's interpretation as a least square optimizer and that it may be better to understand the distribution of the original process taken at discrete
observation times. In \cite{EOESV17},  the authors illustrate that ``the continuous time estimator serves only as a mathematical tool in the study of the discrete-time estimators' asymptotics."

We mention that for the non-ergodic fOU process, i.e., $-\theta>0$, in the LS estimator, the integral in (\ref{hattheta}) is interpreted as a Young integral in \cite{Bel ESOu 11, MaESOu 13}, where  both the consistency and the asymptotic distributions are shown. Moreover, when $\sigma^2$ or $(\sigma^2,\,H)$ is unknown,  the joint estimation of pairs of parameters $(\theta,\, \sigma^2)$ or the three parameters $(\theta,\, \sigma^2,\,H )$ is studied in \cite{BV 17, XZX 11, ZXZN 13}. 

In this present paper, we will avoid this more practical estimates based on discrete observations and address the question whether the Berry-Ess\'{e}en bound  of $\sqrt{T}(\hat{\theta}_T-\theta) $ can be obtained. When $H=\frac12$,  it is well known that the Berry-Ess\'{e}en bound can be shown by means of squeezing techniques, please refer to \cite{Bsh 00, Bsh 08}   and  the references therein. But the case of $H\neq \frac12$ has not  been solved up to now.  Fortunately,  two new approaches based on the Malliavin calculus are proposed to show the Berry-Ess\'{e}en bound recently \cite{kim 2, kim 3}. 
We will give a positive answer to the case of $H\in [\frac12,\,\frac34]$ using one of these two approaches (see also Theorem~\ref{kp} below). 
\begin{thm} \label{main thm}
Let $Z$ be a standard Gaussian random variable.
When $H\in [\frac12, \,\frac34)$, there exists a constant $C_{\theta, H}$ such that when $T$ is large enough, 
\begin{equation}\label{hle 34}
\sup_{z\in \Rnum}\abs{P(\sqrt{\frac{T}{\theta \sigma^2_H}} (\hat{\theta}_T-\theta )\le z)-P(Z\le z)}\le\frac{ C_{\theta, H}}{T^{\beta}},
\end{equation}where
 \begin{equation}
\beta=\left\{
      \begin{array}{ll}
 \frac12, & \quad \text{if } H\in [\frac12,\,\frac58),\\
 \frac38-,&\quad \text{if } H=\frac58,\\
3-4H, &\quad \text{if } H\in (\frac58,\, \frac34).
  \end{array}
\right.
\end{equation}
When $H=\frac34$, there exists a constant $C_{\theta}$ such that  when $T$ is large enough, 
 \begin{equation}\label{upper bound 2}
\sup_{z\in \Rnum}\abs{P(\sqrt{\frac{T}{\theta \sigma^2_H\log T}} (\hat{\theta}_T-\theta )\le z)-P(Z\le z)}\le \frac{C_{\theta}}{{\log T}}, 
\end{equation}
where $\sigma^2_{H}$ is given in \cite{hn,hnz} as follows:
\begin{equation}
\sigma^2_H=
\left\{
      \begin{array}{ll}
 (4H-1) \big( 1+\frac{\Gamma(3-4H)\Gamma(4H-1)}{\Gamma(2H)\Gamma(2-2H)}\big), & \quad H\in [\frac12, \frac34),\\
 \frac{4}{\pi}, &\quad H=\frac34.
     \end{array}
\right.
\end{equation}
\end{thm}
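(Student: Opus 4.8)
The plan is to exploit the explicit ratio structure of the least squares estimator and reduce everything to a quantitative fourth‐moment estimate for the numerator. Writing $F_T = \frac{1}{\sqrt{T}}\int_0^T X_t\,\mathrm{d}B^H_t$ and $G_T = \frac{1}{T}\int_0^T X_t^2\,\mathrm{d}t$, one has $\sqrt{T}(\hat\theta_T-\theta) = -F_T/G_T$, and by the ergodic theorem $G_T \to \sigma^2_H/(2\theta)$ almost surely (or in $L^2$); this is the normalization appearing in \eqref{hle 34}. The first step is therefore to recall from the Kim--Park machinery (Theorem~\ref{kp}) that for a ratio of this type the Kolmogorov distance to the normal law is controlled by three ingredients: (i) the distance of the normalized numerator $F_T$ from a normal law, measured through the third and fourth cumulants $\kappa_3(F_T)$, $\kappa_4(F_T)$ and the defect $\mathrm{Var}\!\big(\tfrac1p\langle DF_T, -DL^{-1}F_T\rangle\big)$ of its Malliavin--Stein coupling; (ii) the $L^2$‐rate at which $G_T$ converges to its constant limit; and (iii) a joint term coupling $F_T$ and $G_T-\mathbb{E}G_T$. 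So the proof is organized as: express $\sqrt T(\hat\theta_T-\theta)$ as $-F_T/G_T$, invoke the abstract Berry--Ess\'een bound for ratios, and then estimate each of the three ingredients as a function of $T$ and $H$.

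The technical heart is step (i): $F_T$ lives in the second Wiener chaos (after subtracting the divergence‐integral correction), so its law is determined by the spectrum of the associated covariance operator, and the cumulants can be written as traces of powers of a kernel operator on $L^2([0,T])$ built from the covariance function $\mathbb{E}[B^H_sB^H_t]$ and the resolvent of the Ornstein--Uhlenbeck semigroup. Concretely, one needs sharp asymptotics, in $T$, of integrals of the form $\int_{[0,T]^k}\prod e^{-\theta|u_i-u_{i+1}|}\,\phi_H(\cdot)\,\mathrm{d}u$ where $\phi_H$ encodes the fBm increments; these are exactly the integrals that govern $\sigma^2_H$ in \cite{hn,hnz}. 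The key dichotomy is whether the kernel $|r|^{2H-2}$ (or its analogue) is integrable against the exponential weights uniformly in $T$: for $H<\tfrac58$ all the relevant $\kappa_3,\kappa_4$ and variance terms are $O(1/T)$, giving $\beta=\tfrac12$; at $H=\tfrac58$ a logarithmic correction enters one of the chaos‐three estimates, costing an $\varepsilon$; and for $H\in(\tfrac58,\tfrac34)$ the slowest term decays like $T^{2(3-4H)}$ inside a square root, yielding $\beta = 3-4H$. At $H=\tfrac34$ the variance of the numerator itself grows like $\log T$, forcing the $\sqrt{T\log T}$ normalization and producing the $1/\log T$ rate in \eqref{upper bound 2}; here one must be careful that the logarithmic factors in numerator and denominator are tracked consistently.

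I would carry this out in the following order. First, set up the Malliavin calculus framework on the Hilbert space $\mathfrak{H}$ associated to $B^H$, recall the representation $\int_0^T X_t\,\mathrm{d}B^H_t = \delta(\text{something}) $ and identify $F_T$ up to lower‐order terms as an element of the second chaos $I_2(f_T)$ with an explicit kernel $f_T$. Second, compute $\mathbb{E}[F_T^2]\to\sigma^2_H$ and establish the rate $\big|\mathbb{E}[F_T^2]-\sigma^2_H\big| = O(T^{-\alpha_H})$ with the $H$‐dependent exponent above. Third, bound $\kappa_3(F_T)$ and $\kappa_4(F_T)$ and the inner‐product variance using the contraction‐norm estimates $\|f_T\otimes_1 f_T\|$ etc., again extracting the $T$‐dependence. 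Fourth, handle the denominator: show $\mathbb{E}\big[(G_T-\mathbb{E}G_T)^2\big] = O(T^{-1})$ for $H<\tfrac34$ (with a $\log T$ twist at $H=\tfrac34$) and that $G_T$ is bounded below away from zero with overwhelming probability, so that division is harmless. Finally, assemble these into Theorem~\ref{kp} and read off $\beta$. I expect the main obstacle to be the sharp (not merely order‐of‐magnitude) asymptotic analysis of the multiple integrals in step three precisely at and just above the threshold $H=\tfrac58$, where competing terms of different origin (one from the correction term converting Skorohod to Young, one from the genuine chaos‐three contribution) decay at nearly the same rate and must be compared carefully to pin down the exponent $3-4H$ and the borderline loss at $H=\tfrac58$.
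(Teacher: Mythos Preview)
Your plan is essentially the paper's own proof: express the normalized estimator as $I_2(f_T)/(I_2(g_T)+b_T)$, apply Theorem~\ref{kp}, and bound each of the six quantities $|b_T^2-2\|f_T\|^2_{\mathfrak{H}^{\otimes 2}}|$, $\|f_T\otimes_1 f_T\|_{\mathfrak{H}^{\otimes 2}}$, $\|f_T\otimes_1 g_T\|_{\mathfrak{H}^{\otimes 2}}$, $\langle f_T,g_T\rangle_{\mathfrak{H}^{\otimes 2}}$, $\|g_T\|^2_{\mathfrak{H}^{\otimes 2}}$, $\|g_T\otimes_1 g_T\|_{\mathfrak{H}^{\otimes 2}}$, with the $H=\tfrac58$ threshold arising from Lemma~\ref{zhou} and the $T^{-(3-4H)}$ rate also from the variance-defect term in Lemma~\ref{2ft2}. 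Your references to $\kappa_3$, a ``chaos-three contribution,'' and a Skorohod-to-Young correction are extraneous---both numerator and denominator are pure second-chaos elements and Theorem~\ref{kp} involves only the contraction norms and inner products above---but once those are dropped you have exactly the paper's argument.
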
 This finding is only a first step to understand the Berry-Ess\'{e}en behavior of the estimator because of the question of measurability mentioned above. When $H\in (\frac58,\, \frac34)$, the same upper bound is obtained in \cite{JLW 19} using an alternative method. In \cite{ES 13}, a discrete time least squares estimator is proposed and an upper Berry-Ess\'{e}en-type bound in the Kolmogorov distance is  shown  when the time interval between two consecutive observations converges to 0.  If observations are fixed time step,  the so-called ``polynomial variation'' estimator is proposed and an optimal Berry-Ess\'{e}en-type bound in the Wasserstein distance is shown in \cite{EV 16}.

Proof of Theorem~\ref{main thm} will be given in Section~\ref{sec prf}.  Although  the lower bound of Kolmogorov distance between $\sqrt{T}(\hat{\theta}_T-\theta) $ and the Gaussian random variable is known in case of $H=\frac12$ \cite{kim 2}, we do not give the similar result in case of $H\neq \frac12$. 
Throughout the paper we assume $H \ge \frac12$. The case $H<\frac12$ will involve much more complex computations. We shall investigate this case separately.


\section{Preliminary} Let $\alpha_H=H(2H-1)$.
Let $\mathcal{E}$ denote the space of all real valued step functions on $[0,\,T]$. The Hilbert space $\mathfrak{H}$ is defined
as the closure of $\mathcal{E}$ endowed with the inner product
\begin{align*}
\innp{\mathbf{1}_{[0,t]},\,\mathbf{1}_{[0,s]}}_{\FH}=\E\big(B^H_t  B^H_s \big).
 \end{align*}Define the Banach space
\begin{align*}
\abs{\mathfrak{H}}=\set{ f | f:[0,\,T]\to \Rnum, \int_0^{T}\int_0^{T} \abs{f(t)f(s)}\abs{t-s}^{2H-2}\dif t\dif s<\infty}.
\end{align*}
It is well known that $L^{\frac{1}{H}}\subset \abs{\mathfrak{H}}\subset \mathfrak{H}$ and when $\varphi,\,\psi\in \abs{\mathfrak{H}}$,
\begin{align*}
\innp{\varphi,\,\psi}_{\mathfrak{H}}=\alpha_H \int_0^{T}\int_0^{T}  \varphi(t)\psi(s) \abs{t-s}^{2H-2}\dif t\dif s.
\end{align*}

The Gaussian isonormal process associated with $\mathfrak{H}$ is given by Wiener integrals with
respect to a fBm for any deterministic kernel $\in\mathfrak{H}$:
\begin{align*}
B^{H}(f)=\int_{0}^{T} f(s)\dif B^{H}_s.
\end{align*}

Let $H_n$ be the $n$-th Hermite polynomial. 
The closed linear subspace $\mathcal{H}_n$ of $L^2(\Omega)$ generated by $\set{H_n(B^H(f)):\, f \in\mathfrak{H}, \norm{f}_{\mathfrak{H}}=1} $ is called the $n$-th Wiener-Ito chaos. The linear  isometric mapping $I_n:\, \mathfrak{H}^{\odot n}\to \mathcal{H}_n$  given by $I_n(h^{\otimes n})=n! H_n(B^H(f))$ is called the $n$-th multiple Wiener-Ito integral. For any $f\in \mathfrak{H}^{\otimes n}$, define $I_n(f)=I_n(\tilde{f})$ where $\tilde{f}$ is the symmetrization of $f$.

Given $f\in \mathfrak{H}^{\otimes p}$ and $g\in \mathfrak{H}^{\otimes q}$ and $r=1,\cdots, p\wedge q$,  
$r$-th contraction between $f$ and $g$ is the element of $\mathfrak{H}^{\otimes (p+q-2r)}$ defined by
\begin{align*}
f\otimes_{r} g (t_1,\dots,t_{p+q-2r})&=\alpha^{2r}_H\int_{[0,\,T]^{2r}} \abs{u_1-v_1}^{2H-2}\dots \abs{u_r-v_r}^{2H-2} f(t_1,\dots,t_{p-r},u_1,\dots,u_r)\\
&\times g(t_{p-r+1},\dots,t_{p+q-2r},v_1,\dots,v_r)\dif \vec{u} \dif \vec{v},
\end{align*}where $\vec{u}=(u_1,\dots, u_r)$, $\vec{v}=(v_1,\dots,v_r)$.

We will make use of the following estimate of the Kolmogrov distance between a nonlinear Gaussian functional and the standard normal (see Corollary 1 of \cite{kim 3}).
\begin{thm}[Kim, Y. T., \& Park, H. S]\label{kp}
Suppose that $\varphi_T(t,s)$ and $\psi_T(t,s)$ are two functions on $\mathfrak{H}^{\otimes 2}$.
Let $b_T$ be a positive function of $T$ such that $I_2(\psi_T)+b_T>0$ a.s. If $\Psi_i(T)\to 0,\,i=1,2,3$ as $T\to \infty$, then there exists a constant $c$ such that for $T$ large enough, 
\begin{equation}
\sup_{z\in \Rnum}\abs{P(\frac{I_2(\varphi_T)}{ I_2(\psi_T)+b_T}\le z)-P(Z\le z)}\le c\times \max_{i=1,2,3} \Psi_i(T),
\end{equation}
where 
\begin{align*}
\Psi_1(T)&=\frac{1}{b_T^2}\sqrt{\big[b^2_T-2\norm{\varphi_T}_{\mathfrak{H}^{\otimes 2}}^2\big]^2+8\norm{\varphi_T \otimes_1 \varphi_T}_{\mathfrak{H}^{\otimes 2}}^2},\\
\Psi_2(T)&=\frac{2}{b_T^2}\sqrt{2\norm{\varphi_T \otimes_1 \psi_T}_{\mathfrak{H}^{\otimes 2}}^2+\innp{\varphi_T,\,\psi_T}_{\mathfrak{H}^{\otimes 2}}^2},\\
\Psi_3(T)&=\frac{2}{b_T^2}\sqrt{ \norm{\psi_T}_{\mathfrak{H}^{\otimes 2}}^4+2\norm{\psi_T \otimes_1\psi_T}_{\mathfrak{H}^{\otimes 2}}^2}.
\end{align*}
\end{thm}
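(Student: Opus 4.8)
The statement is a ratio version of the Nourdin--Peccati fourth--moment Berry--Ess\'een bound, and the plan is to prove it by combining Stein's method for the Kolmogorov distance with Malliavin integration by parts, exploiting the algebraic structure of the ratio. Write $W_T=I_2(\varphi_T)/(I_2(\psi_T)+b_T)$ and $Q_T=I_2(\psi_T)+b_T>0$ a.s. For each $z$ let $f_z$ be the bounded solution of the Stein equation $f'(x)-xf(x)=\one_{\{x\le z\}}-P(Z\le z)$, so that after the usual regularization of the kernel one has $\sup_z\abs{P(W_T\le z)-P(Z\le z)}\le \sup_z\abs{\E[f_z'(W_T)-W_Tf_z(W_T)]}$; crucially $\norm{f_z}_\infty$ and $\norm{f_z'}_\infty$ are bounded by universal constants \emph{independent of $z$}, and this uniformity is what will ultimately make the final bound uniform in $z$. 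The structural identity I would exploit is obtained by clearing the denominator, namely $b_TW_T=I_2(\varphi_T)-W_TI_2(\psi_T)$, which replaces the factor $W_Tf_z(W_T)$ by quantities on which the Malliavin calculus can act directly.

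Multiplying this identity by $f_z(W_T)$ and taking expectations gives $b_T\E[W_Tf_z(W_T)]=\E[I_2(\varphi_T)f_z(W_T)]-\E[I_2(\psi_T)\,W_Tf_z(W_T)]$. Next I would integrate by parts. Since $I_2(\varphi_T),I_2(\psi_T)$ lie in the second chaos, $-DL^{-1}I_2(\cdot)=\tfrac12 DI_2(\cdot)$, and the duality formula yields
\begin{align*}
\E[I_2(\varphi_T)f_z(W_T)]&=\tfrac12\E\big[f_z'(W_T)\innp{DI_2(\varphi_T),DW_T}_{\FH}\big],\\
\E[I_2(\psi_T)W_Tf_z(W_T)]&=\tfrac12\E\big[\big(f_z(W_T)+W_Tf_z'(W_T)\big)\innp{DI_2(\psi_T),DW_T}_{\FH}\big].
\end{align*}
Differentiating the ratio gives $DW_T=Q_T^{-1}\big(DI_2(\varphi_T)-W_T\,DI_2(\psi_T)\big)$, so both inner products reduce to $Q_T^{-1}$ times linear combinations of the three random quantities $\norm{DI_2(\varphi_T)}_{\FH}^2$, $\innp{DI_2(\varphi_T),DI_2(\psi_T)}_{\FH}$ and $\norm{DI_2(\psi_T)}_{\FH}^2$. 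Everything is now expressed through these three ``carr\'es du champ'' and the scalar denominator $Q_T$.

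Assembling, I would write $\E[f_z'(W_T)-W_Tf_z(W_T)]=\E\big[f_z'(W_T)\big(1-\tfrac1{2b_T^2}\norm{DI_2(\varphi_T)}_{\FH}^2\big)\big]+\mathcal R_T$, where $\mathcal R_T$ collects the cross terms carrying $\innp{DI_2(\varphi_T),DI_2(\psi_T)}_{\FH}$ and the pure--denominator terms carrying $\norm{DI_2(\psi_T)}_{\FH}^2$. For the main term, $\E[\tfrac12\norm{DI_2(\varphi_T)}_{\FH}^2]=\E[I_2(\varphi_T)^2]=2\norm{\varphi_T}_{\FH^{\otimes2}}^2$, so Cauchy--Schwarz together with the multiplication formula for multiple integrals, which expresses $\mathrm{Var}(\tfrac12\norm{DI_2(\varphi_T)}_{\FH}^2)$ as a constant multiple of $\norm{\varphi_T\otimes_1\varphi_T}_{\FH^{\otimes2}}^2$, bounds it by a constant times $b_T^{-2}\sqrt{(b_T^2-2\norm{\varphi_T}_{\FH^{\otimes2}}^2)^2+\norm{\varphi_T\otimes_1\varphi_T}_{\FH^{\otimes2}}^2}$, i.e.\ $\Psi_1(T)$ once the constants are tracked. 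The same computation on the cross inner product, whose mean is $2\innp{\varphi_T,\psi_T}_{\FH^{\otimes2}}$ and whose fluctuation produces $\norm{\varphi_T\otimes_1\psi_T}_{\FH^{\otimes2}}^2$, yields $\Psi_2(T)$, and the pure--denominator contribution, governed by $\E[I_2(\psi_T)^2]=2\norm{\psi_T}_{\FH^{\otimes2}}^2$ and $\norm{\psi_T\otimes_1\psi_T}_{\FH^{\otimes2}}^2$, yields $\Psi_3(T)$.

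The main obstacle will be the random denominator: the factor $Q_T^{-1}$ must be replaced by $b_T^{-1}$ at a controlled cost. A naive estimate $\E\abs{I_2(\psi_T)/Q_T}\lesssim \norm{\psi_T}_{\FH^{\otimes2}}/b_T$ only produces an error of order $\sqrt{\Psi_3(T)}$, which is too weak; the correct order $\Psi_3(T)$ emerges only after the linear--in--$I_2(\psi_T)$ contributions from the three groups of terms are shown to cancel (equivalently, after a further integration by parts on the surviving $I_2(\psi_T)$ factor), leaving a genuinely quadratic dependence on the denominator fluctuation. This mirrors the elementary fact that in the reduced problem $P(I_2(\varphi_T-z\psi_T)\le zb_T)$ the variance mismatch enters only through $\sup_z\abs{z}^3\phi(z)<\infty$. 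Making this cancellation rigorous, together with the moment bounds on $Q_T^{-1}$ and $W_T/Q_T$ needed to justify each integration by parts (relying on $Q_T>0$ a.s.\ and the concentration of $I_2(\psi_T)$ about $0$), is the technical heart of the argument; the remaining work, namely expanding the contractions of $\varphi_T-z\psi_T$ and matching constants via the multiplication formula, is routine bookkeeping.
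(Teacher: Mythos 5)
There is a genuine gap, and it sits exactly where you placed your ``technical heart.'' Your route runs Stein's method on the ratio $W_T$ itself and integrates by parts after writing $DW_T=Q_T^{-1}\big(DI_2(\varphi_T)-W_T DI_2(\psi_T)\big)$, so every resulting term carries a factor $Q_T^{-1}$ (indeed $Q_T^{-2}$ after the second integration by parts). But the hypothesis of the theorem is only that $Q_T=I_2(\psi_T)+b_T>0$ a.s., and this does \emph{not} imply any negative moments of $Q_T$: writing $I_2(\psi_T)=\sum_i\lambda_i(\xi_i^2-1)$ with $\xi_i$ i.i.d.\ standard normal, a.s.\ positivity essentially forces $\lambda_i\ge 0$ and $b_T\ge\sum_i\lambda_i$, and in the boundary case $b_T=\sum_i\lambda_i$ with one dominant eigenvalue, $Q_T$ behaves like $\lambda\xi^2$ with $\E[\xi^{-2}]=\infty$. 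So the integrations by parts you invoke are not justified at the stated level of generality, and no concentration of $I_2(\psi_T)$ around $0$ (which is all that $\Psi_3(T)\to 0$ gives) can repair this. On top of that, you explicitly defer the cancellation of the linear-in-$I_2(\psi_T)$ contributions, acknowledging that without it you only reach an error of order $\sqrt{\Psi_3(T)}$; since that cancellation is precisely what the theorem's rate requires, the proposal as written proves a strictly weaker statement even granting the unjustified integrability.

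The proof the paper relies on (Corollary 1 of Kim--Park, cited rather than reproved) avoids both problems with a single observation that you mention only as an analogy at the end: since $Q_T>0$ a.s., for every fixed $z$ one has the exact identity $P(W_T\le z)=P\big(I_2(\varphi_T-z\psi_T)\le z\,b_T\big)$, so the ratio never needs to be differentiated and no inverse moments of $Q_T$ ever appear. One then applies the Stein--Malliavin Kolmogorov bound to the single second-chaos element $I_2(\varphi_z)$ with $\varphi_z:=\varphi_T-z\psi_T$, whose relevant functionals are polynomials in $z$: $\norm{\varphi_z}_{\FH^{\otimes 2}}^2=\norm{\varphi_T}_{\FH^{\otimes 2}}^2-2z\innp{\varphi_T,\,\psi_T}_{\FH^{\otimes 2}}+z^2\norm{\psi_T}_{\FH^{\otimes 2}}^2$ and $\varphi_z\otimes_1\varphi_z=\varphi_T\otimes_1\varphi_T-z\,(\varphi_T\otimes_1\psi_T+\psi_T\otimes_1\varphi_T)+z^2\,\psi_T\otimes_1\psi_T$. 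The powers of $z$ multiplying the variance mismatch are absorbed by Gaussian-density decay (your own remark that $\sup_z\abs{z}^3\phi(z)<\infty$), and grouping the $z^0$, $z^1$, $z^2$ coefficients yields exactly $\Psi_1(T)$, $\Psi_2(T)$, $\Psi_3(T)$, uniformly in $z$. Your final bookkeeping paragraph shows you identified the right target quantities; the repair is structural, not computational: make the denominator-clearing identity the \emph{first} step of the proof, at the level of the distribution function, rather than an a posteriori consistency check on a ratio-level Stein argument that the hypotheses cannot support.
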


\section{Proof of Theorem \ref{main thm}} \label{sec prf}
It follows from Eq.(\ref{hattheta}) and the product formula of multiple integrals that 
\begin{equation}\label{ratio 1}
\sqrt{\frac{T}{\theta \sigma^2_H}} (\hat{\theta}_T-\theta )=\frac{I_2(f_T)}{ I_2(g_T)+b_T},
\end{equation}
where
\begin{align}
f_T(t,s)&=\frac{1}{2\sqrt{\theta\sigma^2_H T}}e^{-\theta \abs{t-s}}\mathbf{1}_{\set{0\le s,t\le T}},\\
g_T(t,s)&=\sqrt{\frac{\sigma^2_H}{\theta T}}f_T-\frac{1}{2\theta T}h_T,\label{gt ts}\\
h_T(t,s)&= e^{-\theta (T-t)-\theta (T-s)}\mathbf{1}_{\set{0\le s,t\le T}},\label{ht ts}\\
b_T&=\frac{1}{T}\int_0^T\, \norm{e^{-\theta (t-\cdot) }\mathrm{1}_{[0,t]}(\cdot)}^2 _{\mathfrak{H}}\dif t.\label{bt bt}
\end{align} The reader can also refer to Eq.(17)-(19) of \cite{kim 2} for details.

We need several lemmas before the proof of Theorem~\ref{main thm}.
First, we can show the following estimate by combining a slight modification of Proposition~7 or (3.17) of \cite{hnz} with Lemma 5.4 of web-only Appendix of \cite{hn}.
\begin{lem}\label{zhou}
When $ H\in [\frac12, \frac34]$, there exists a constant $C_{\theta,\,H}$ such that 
\begin{equation}\label{zhou ineq}
\norm{f_T\otimes_{1} f_T}_{\mathfrak{H}^{\otimes 2}}\le  
C_{\theta,\,H} \left\{
      \begin{array}{ll}
 \frac{1}{\sqrt T}, & \quad H\in [\frac12, \frac58),\\
  \frac{1}{ T^{ \frac38-}}, & \quad H= \frac58,\\
 \frac{1}{  T^{3-4H}} , &\quad H\in (\frac58,\,\frac34].
     \end{array}
\right.
\end{equation}
\end{lem}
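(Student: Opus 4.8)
The plan is to turn $\norm{f_T\otimes_1 f_T}_{\mathfrak{H}^{\otimes 2}}$ into an explicit multiple integral of products of the exponential kernel $e^{-\theta\abs{\cdot}}$ and the fractional kernel $\abs{\cdot}^{2H-2}$, to bound the inner convolution by its whole-line value, and then to read off the rate from a scaling analysis; all three regimes will come from a single crossover in the resulting singular integral. Taking $p=q=2$, $r=1$ in the contraction formula gives
\begin{equation*}
(f_T\otimes_1 f_T)(x,y)=\frac{1}{4\theta\sigma^2_H T}\,\psi(x,y),\qquad \psi(x,y):=\alpha_H^2\int_{[0,T]^2}e^{-\theta\abs{x-u}}\abs{u-v}^{2H-2}e^{-\theta\abs{y-v}}\dif u\dif v,
\end{equation*}
and the definition of the $\mathfrak{H}^{\otimes 2}$ inner product then yields
\begin{equation*}
\norm{f_T\otimes_1 f_T}^2_{\mathfrak{H}^{\otimes 2}}=\frac{\alpha_H^2}{(4\theta\sigma^2_H T)^2}\int_{[0,T]^4}\psi(x_1,y_1)\,\psi(x_2,y_2)\,\abs{x_1-x_2}^{2H-2}\abs{y_1-y_2}^{2H-2}\dif x_1\dif y_1\dif x_2\dif y_2.
\end{equation*}
I would then note that enlarging the $(u,v)$-domain of $\psi$ from $[0,T]^2$ to $\Rnum^2$ only increases the positive integrand, so the whole-line, translation-invariant value is an upper bound; since $2H-2\in(-1,0)$ this value is finite at the diagonal and behaves like $\abs{x-y}^{2H-2}$ for large separation, whence the uniform estimate $\psi(x,y)\le C(1+\abs{x-y})^{2H-2}$. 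Crucially, no boundary analysis is needed for an upper bound.

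Inserting this estimate reduces the claim to the growth rate in $T$ of
\begin{equation*}
J(T):=\int_{[0,T]^4}(1+\abs{x_1-y_1})^{2H-2}(1+\abs{x_2-y_2})^{2H-2}\abs{x_1-x_2}^{2H-2}\abs{y_1-y_2}^{2H-2}\dif x_1\dif y_1\dif x_2\dif y_2 .
\end{equation*}
Passing to the difference variables $a=x_1-y_1$, $b=x_2-y_2$, $c=x_1-x_2$ (so that $y_1-y_2=c-a+b$) and integrating out the one remaining free variable, which contributes a factor of order $T$ by translation invariance in the bulk, the rate is governed by
\begin{equation*}
\int_{[-T,T]^3}(1+\abs a)^{2H-2}(1+\abs b)^{2H-2}\abs c^{2H-2}\abs{c-a+b}^{2H-2}\dif a\dif b\dif c ,
\end{equation*}
whose integrand is of size $R^{4(2H-2)}$ when $\abs{(a,b,c)}\asymp R$. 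Against the three-dimensional shell volume $R^2\,\dif R$ this converges for $8H-6<-1$, i.e. $H<\tfrac58$, is logarithmic at $H=\tfrac58$, and grows like $T^{8H-5}$ for $H\in(\tfrac58,\tfrac34]$. Hence $J(T)\lesssim T\,(1+T^{8H-5})$ for $H\neq\tfrac58$, with an extra $\log T$ exactly at $H=\tfrac58$, so that, away from that point,
\begin{equation*}
\norm{f_T\otimes_1 f_T}^2_{\mathfrak{H}^{\otimes 2}}\lesssim \frac{1}{T^2}\,J(T)\lesssim \frac{1}{T}+T^{8H-6},
\end{equation*}
and taking square roots gives $\norm{f_T\otimes_1 f_T}_{\mathfrak{H}^{\otimes 2}}\lesssim T^{-1/2}+T^{-(3-4H)}$. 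This is the first and third cases of \eqref{zhou ineq} (one checks $T^{-(3-4H)}$ is dominated by $T^{-1/2}$ precisely when $H<\tfrac58$), while at $H=\tfrac58$ the extra $(\log T)^{1/2}$ is absorbed into the weaker exponent $\tfrac38-$. The endpoint $H=\tfrac12$, where $\alpha_H=0$ and $\mathfrak{H}=L^2$, is handled directly: there $\psi$ is a convolution of two exponentials, decays like $e^{-\theta\abs{x-y}/2}$, and gives $T^{-1/2}$ at once.

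The main obstacle is the power counting for this singular integral, where the four factors $\abs{\cdot}^{2H-2}$ are only borderline jointly integrable, so the leading rate is sensitive to the two marginal values: the crossover $H=\tfrac58$, at which the three-dimensional integral is logarithmically divergent, and the endpoint $H=\tfrac34$, where $3-4H=0$ and the contraction norm ceases to decay—consistent with the additional $\log T$ normalization used at $H=\tfrac34$ in \eqref{upper bound 2}. Carrying out this counting with explicit, uniform-in-$T$ control of the constants is exactly the content of the cited computations, so in practice the estimate follows by combining a slight modification of Proposition~7 / (3.17) of \cite{hnz} with Lemma~5.4 of \cite{hn}.
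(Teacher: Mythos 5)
Your proposal is correct in its conclusions and takes a genuinely different --- indeed more informative --- route than the paper's own treatment: the paper proves this lemma in a single sentence, by deferring to a slight modification of Proposition~7 (or (3.17)) of \cite{hnz} together with Lemma~5.4 of the web-only appendix of \cite{hn}, which are exactly the citations you retreat to in your closing sentence. What you do differently is to replace the simplex-by-simplex asymptotics of those references by two structural observations: by positivity, the inner convolution can be bounded by its whole-line, translation-invariant value, giving $\psi(x,y)\le C(1+\abs{x-y})^{2H-2}$ uniformly in $T$ with no boundary analysis; and the $T$-growth of the remaining four-fold integral is then a pure scaling question. This makes the two thresholds in \eqref{zhou ineq} ($H=\frac58$ for the crossover between $T^{-1/2}$ and $T^{-(3-4H)}$, $H=\frac34$ for the loss of decay) visible at a glance, and your rates agree with the lemma in all three regimes, including the absorption of $(\log T)^{1/2}$ into the exponent $\frac38-$ at $H=\frac58$ and the elementary $L^2$ argument at $H=\frac12$. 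What the citation route buys instead is that the delicate estimates are already in print with uniform constants.

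The one step you cannot leave as stated is the shell power counting, because the integrand is not of size $R^{8H-8}$ uniformly on $\abs{(a,b,c)}\asymp R$: it blows up on the hyperplanes $c=0$ and $c-a+b=0$. The rigorous version is regime-dependent. For $H\in(\frac58,\frac34]$, substitute $(a,b,c)=T(x,y,z)$ and drop the ``$1+$'' (allowed since $2H-2<0$) to bound the integral by $T^{8H-5}$ times
\begin{equation*}
\int_{[-1,1]^3}\abs{x}^{2H-2}\abs{y}^{2H-2}\abs{z}^{2H-2}\abs{z-x+y}^{2H-2}\,\dif x\,\dif y\,\dif z,
\end{equation*}
which is finite because on every singular subspace the accumulated exponent stays below the codimension; note in particular that at the origin all four forms vanish and their exponents sum to $8-8H$, which is $<3$ only when $H>\frac58$, so this rescaled bound is simply false for $H\le\frac58$ and cannot be used there. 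For $H<\frac58$ one argues instead that the regularized integrand is integrable over all of $\Rnum^3$ (locally integrable since $4-4H<2$, and integrable at infinity since the total exponent $8-8H$ exceeds the dimension $3$), and at $H=\frac58$ a dyadic decomposition produces the logarithm. Alternatively, integrate in $c$ first, which yields $C_H\abs{a-b}^{4H-3}$ for $H<\frac34$, and iterate one-dimensional estimates. With this step made precise, your argument is a complete, self-contained proof of the lemma, which is more than the paper itself supplies.
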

Since $H>\frac12$, we can write $b_T$ as 
\begin{align*}
b_T &=\frac{\alpha_H}{T}\int_0^T\dif t\int_{[0,t]^2}e^{-\theta (t-u)-\theta (t-v)} \abs{u-v}^{2H-2}\dif u\dif v ,\\
&=\frac{2\alpha_H}{T}\int_0^T\dif t\int_{0\le u\le v \le t}e^{-\theta (t-u)-\theta (t-v)} \abs{u-v}^{2H-2}\dif u\dif v.
\end{align*}

\begin{lem} \label{bt exponential}
When $H\ge \frac12$,  the convergent speed of $b_T \to H\Gamma(2H)\theta^{-2H}$ is at least $\frac{1}{T}$.
\end{lem}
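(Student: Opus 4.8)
The plan is to realize $b_T$ as a Ces\`aro average and then estimate a tail integral. Since $H>\frac12$, after the change of variables $a=t-u$, $b=t-v$ the inner double integral in
\[
b_T=\frac{\alpha_H}{T}\int_0^T\dif t\int_{[0,t]^2}e^{-\theta(t-u)-\theta(t-v)}\abs{u-v}^{2H-2}\dif u\dif v
\]
becomes $g(t):=\int_{[0,t]^2}e^{-\theta a-\theta b}\abs{a-b}^{2H-2}\dif a\dif b$, so that $b_T=\frac{\alpha_H}{T}\int_0^T g(t)\,\dif t$. The natural limit of $g(t)$ as $t\to\infty$ is
\[
g_\infty:=\int_{[0,\infty)^2}e^{-\theta a-\theta b}\abs{a-b}^{2H-2}\dif a\dif b,
\]
and I would first evaluate it in closed form: splitting along the diagonal $a=b$, substituting in the inner variable and exchanging the order of integration gives $g_\infty=\theta^{-2H}\Gamma(2H-1)$. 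Hence $\alpha_H g_\infty=H(2H-1)\theta^{-2H}\Gamma(2H-1)=H\Gamma(2H)\theta^{-2H}$, which is exactly the asserted limit.

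The rate then reduces to a single uniform estimate. Writing
\[
b_T-H\Gamma(2H)\theta^{-2H}=-\frac{\alpha_H}{T}\int_0^T\big(g_\infty-g(t)\big)\,\dif t,
\]
and noting $g_\infty-g(t)>0$, it suffices to show $\int_0^\infty\big(g_\infty-g(t)\big)\,\dif t\le M<\infty$ for a constant $M$ independent of $T$; dividing by $T$ immediately yields the $O(1/T)$ bound (and, since the integral tends to a strictly positive constant, the speed is genuinely of order $1/T$). Expressing $g_\infty-g(t)$ as the integral of the same positive kernel over $[0,\infty)^2\setminus[0,t]^2=\{a>t\}\cup\{b>t\}$ and using symmetry in $(a,b)$,
\[
0<g_\infty-g(t)\le 2\int_t^\infty e^{-\theta a}J(a)\,\dif a,\qquad J(a):=\int_0^\infty e^{-\theta b}\abs{a-b}^{2H-2}\dif b,
\]
so by Tonelli's theorem $\int_0^\infty\big(g_\infty-g(t)\big)\dif t\le 2\int_0^\infty a\,e^{-\theta a}J(a)\,\dif a$, which is finite as soon as $J$ grows at most polynomially.

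The only real work is the pointwise control of $J(a)$ for large $a$, and here the standing assumption $H>\frac12$ is essential. Splitting the $b$-integral at $b=a/2$ and at the singularity $b=a$: on $\{b<a/2\}$ one has $\abs{a-b}^{2H-2}\le(a/2)^{2H-2}$ because the exponent $2H-2$ is negative, giving a contribution of order $a^{2H-2}$; on $\{b>a/2\}$ the exponential weight is at most $e^{-\theta a/2}$ up to the singularity, while the genuinely singular piece $\int_a^\infty e^{-\theta b}(b-a)^{2H-2}\dif b=e^{-\theta a}\theta^{1-2H}\Gamma(2H-1)$ converges precisely because $2H-2>-1$. This gives $J(a)\le C\,a^{2H-2}$ for large $a$ (with $J$ bounded near $0$), so that $a\,e^{-\theta a}J(a)$ is integrable and $M$ is finite. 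The boundary case $H=\frac12$ must be handled separately, since there $\alpha_H=0$ and $\abs{u-v}^{2H-2}$ is non-integrable; in that case one computes directly from the definition, $b_T=\frac1T\int_0^T\int_0^t e^{-2\theta(t-u)}\dif u\,\dif t=\frac{1}{2\theta}-\frac{1-e^{-2\theta T}}{4\theta^2T}$, which already displays the $1/T$ rate toward $\frac{1}{2\theta}=H\Gamma(2H)\theta^{-2H}\big|_{H=1/2}$. The main obstacle is thus the integrable diagonal singularity of $\abs{a-b}^{2H-2}$; once it is tamed using $H>\frac12$, the remaining decay is purely exponential bookkeeping.
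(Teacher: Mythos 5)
Your proof is correct, but it takes a genuinely different route from the paper's. The paper proceeds by explicit computation: after the triangular change of variables $a=t-v$, $b=v-u$ it integrates out $a$, exchanges the order of the $t$- and $b$-integrations, and lands on a closed-form expression
\[
b_T=\frac{\alpha_H}{\theta}\Big\{\int_0^T e^{-\theta t}t^{2H-2}\dif t+\frac{1}{2\theta T}\Big[e^{-2\theta T}\int_0^T e^{\theta t}t^{2H-2}\dif t-\int_0^T e^{-\theta t}t^{2H-2}(1+2\theta t)\dif t\Big]\Big\},
\]
from which the $O(1/T)$ error is read off term by term (a tail of a Gamma integral, an exponentially small piece, and a $1/T$ piece). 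You instead avoid any closed form: you view $b_T$ as the Ces\`aro average $\frac{\alpha_H}{T}\int_0^T g(t)\dif t$ of the monotone quantity $g(t)\uparrow g_\infty$, and reduce the rate to the single integrability statement $\int_0^\infty\big(g_\infty-g(t)\big)\dif t<\infty$, proved via the union bound on $[0,\infty)^2\setminus[0,t]^2$, Tonelli, and the pointwise bound $J(a)\le C a^{2H-2}$ (where $H>\frac12$ enters exactly as in the paper, to tame the diagonal singularity). Both treatments isolate $H=\frac12$ as a trivial separate case. What each buys: the paper's explicit formula identifies the error terms precisely (and in particular shows part of the error is exponentially small), while your argument is softer and more portable — it works verbatim for any positive kernel whose deficit is integrable, with no need for exact antiderivatives — and as a bonus it shows the rate is genuinely of order $1/T$ and not faster, since $\int_0^T\big(g_\infty-g(t)\big)\dif t$ increases to a strictly positive constant; the lemma as stated only requires the upper bound.
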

\begin{proof}The case of $H=\frac12$ is simple. When $H>\frac12$, making change variable $ a=t-v,\,b=v-u$ and then using integration by parts, we have that
\begin{align*}
b_T &=   \frac{2\alpha_H}{T } \int_0^T\dif t\int_{0\le a+b \le t,\,a,b\ge 0}e^{-\theta (b+2a)} b^{2H-2}\dif a\dif b\\
&=  \frac{\alpha_H}{\theta T } \int_0^T\dif t\int_0^t e^{-\theta b} b^{2H-2} [1-e^{-2\theta (t-b)}] \dif b \\
&= \frac{\alpha_H}{\theta T }\Big[ \int_0^T\dif t\int_0^t e^{-\theta b} b^{2H-2}  \dif b -  \int_0^T e^{-2\theta t }\dif t\int_0^t e^{\theta b} b^{2H-2}   \dif b\Big ]\\
&= \frac{\alpha_H}{\theta } \Big\{ \int_0^T e^{-\theta t} t^{2H-2} \dif t +\frac{1}{2\theta T}\big[ e^{-2\theta T}  \int_0^Te^{\theta t} t^{2H-2} \dif t -  \int_0^T e^{-\theta t} t^{2H-2} (1+2\theta t)\dif t \big]\Big\}.
\end{align*}
Hence,  there exists a constant $C_{\theta,H}$ such that 
\begin{align*}
&\frac{\abs{b_T -H\Gamma(2H)\theta^{-2H}}}{\alpha_H/ \theta} \\
&\le  \int_T^{\infty}e^{-\theta t} t^{2H-2} \dif t +\frac{1}{2\theta Te^{2\theta T}}   \int_0^Te^{\theta t} t^{2H-2} \dif t +  \frac{1}{2\theta T}\int_0^T e^{-\theta t} t^{2H-2} (1+2\theta t)\dif t  \\
&\le \frac{C_{ \theta,H}}{T}.
\end{align*}
\end{proof}

\begin{lem}\label{ht limit}
Let $h_T$ be given as in (\ref{ht ts}). Then as $T\to \infty$,   
\begin{equation}
\frac{1}{\sqrt{T}} h_T \to 0,\quad \text{ in} \quad \mathfrak{H}^{\otimes 2}. 
\end{equation}
\end{lem}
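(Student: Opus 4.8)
The plan is to exploit the fact that $h_T$ is a pure tensor. Writing $\phi_T(t)=e^{-\theta(T-t)}\mathbf{1}_{[0,T]}(t)$, we have $h_T=\phi_T\otimes\phi_T$, so that by the multiplicativity of the inner product on $\mathfrak{H}^{\otimes 2}$ on simple tensors, $\norm{h_T}_{\mathfrak{H}^{\otimes 2}}=\norm{\phi_T}_{\mathfrak{H}}^2$. Since convergence to $0$ in $\mathfrak{H}^{\otimes 2}$ means norm convergence, proving the lemma reduces to showing $\frac{1}{\sqrt T}\norm{\phi_T}_{\mathfrak{H}}^2\to 0$, and for this it suffices to show that $\norm{\phi_T}_{\mathfrak{H}}^2$ stays bounded as $T\to\infty$.

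Next I would compute $\norm{\phi_T}_{\mathfrak{H}}^2$ explicitly. For $H>\frac12$ the $\abs{\mathfrak{H}}$-representation of the inner product gives
\[
\norm{\phi_T}_{\mathfrak{H}}^2=\alpha_H\int_0^T\int_0^T e^{-\theta(T-t)}e^{-\theta(T-s)}\abs{t-s}^{2H-2}\dif t\dif s.
\]
The natural move is the substitution $u=T-t$, $v=T-s$, which leaves $\abs{t-s}=\abs{u-v}$ unchanged and turns the weights into $e^{-\theta u}e^{-\theta v}$, yielding
\[
\norm{\phi_T}_{\mathfrak{H}}^2=\alpha_H\int_0^T\int_0^T e^{-\theta u}e^{-\theta v}\abs{u-v}^{2H-2}\dif u\dif v.
\]
As $T\to\infty$ the right-hand side increases monotonically (the integrand is positive and the domain grows) to the constant $\alpha_H\int_0^\infty\int_0^\infty e^{-\theta u-\theta v}\abs{u-v}^{2H-2}\dif u\dif v$, so $\norm{\phi_T}_{\mathfrak{H}}^2$ is bounded uniformly in $T$ as soon as this limiting integral is finite.

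The only point requiring care is precisely the finiteness of this limiting double integral, i.e. the integrability of the kernel $e^{-\theta u-\theta v}\abs{u-v}^{2H-2}$. The exponential factors control the behaviour at infinity, while near the diagonal the singularity $\abs{u-v}^{2H-2}$ is integrable exactly because $2H-2>-1$ for $H>\frac12$; this is the main (and essentially only) obstacle, and it is mild. The boundary case $H=\frac12$ must be handled separately, but there $\mathfrak{H}=L^2[0,T]$ and $\norm{\phi_T}_{\mathfrak{H}}^2=\int_0^T e^{-2\theta u}\dif u\to\frac{1}{2\theta}$ directly.

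Combining the two cases, there is a constant $C_{\theta,H}$ with $\norm{\phi_T}_{\mathfrak{H}}^2\le C_{\theta,H}$ for all large $T$, whence
\[
\Big\|\tfrac{1}{\sqrt T}h_T\Big\|_{\mathfrak{H}^{\otimes 2}}=\tfrac{1}{\sqrt T}\norm{\phi_T}_{\mathfrak{H}}^2\le \frac{C_{\theta,H}}{\sqrt T}\To 0,
\]
which proves the claim, in fact at rate $T^{-1/2}$.
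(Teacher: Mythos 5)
Your proof is correct, and it takes a genuinely different (and shorter) route than the paper's. The paper works directly with the full four-fold integral representation of $\norm{h_T}_{\mathfrak{H}^{\otimes 2}}^2$, reduces it by symmetry to an ordered region, applies L'Hospital's rule in $T$, and then splits the resulting domain into three regions, checking that each limiting integral $I_i$ is finite to conclude $\frac{1}{T}\norm{h_T}_{\mathfrak{H}^{\otimes 2}}^2\to 0$. You instead exploit the rank-one structure $h_T=\phi_T\otimes\phi_T$ with $\phi_T(t)=e^{-\theta(T-t)}\mathbf{1}_{[0,T]}(t)$, which collapses the problem to a two-dimensional integral via $\norm{h_T}_{\mathfrak{H}^{\otimes 2}}=\norm{\phi_T}_{\mathfrak{H}}^2$; the substitution $u=T-t$, $v=T-s$ then makes the integrand $T$-independent, so the norm is monotone in $T$ and bounded by the finite improper integral $\alpha_H\int_0^\infty\int_0^\infty e^{-\theta u-\theta v}\abs{u-v}^{2H-2}\dif u\dif v$ (indeed equal to $\alpha_H\Gamma(2H-1)\theta^{-2H}$, finite precisely because $2H-2>-1$). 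Beyond being more elementary — no L'Hospital, no case-splitting into ordered regions — your argument buys a quantitative improvement the paper's statement does not record: $\norm{h_T}_{\mathfrak{H}^{\otimes 2}}$ is uniformly bounded, so $\frac{1}{\sqrt T}h_T\to 0$ at the explicit rate $T^{-1/2}$, whereas the paper's proof only certifies $o(1)$. Your separate treatment of $H=\frac12$ matches the paper's (both dismiss it as immediate from $\mathfrak{H}=L^2[0,T]$).
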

\begin{proof} The case of $H=\frac12$ is simple. When $H>\frac12$, by the symmetrical property and the L'Hospital's rule, we have that 
\begin{align*}
& \lim_{T\to \infty}\frac{1}{\alpha_H^2{T}} \norm{h_T}_{\mathfrak{H}^{\otimes 2}}^2\\
&=\lim_{T\to \infty}\frac{1}{T}\int_{[0,T]^4}e^{-\theta[(T-t_1)+(T-s_1)+(T-t_2)+(T-s_2)]}\abs{t_1-t_2}^{2H-2}\abs{s_1-s_2}^{2H-2}\dif \vec{t}\dif \vec{s} \\
&=\lim_{T\to \infty}\frac{8}{Te^{4\theta T}}\int_{0\le t_2\le t_1\le T,\,0\le s_2\le s_1\le T,\, s_1\le t_1} e^{\theta(t_1+t_2+s_1+s_2)}\abs{t_1-t_2}^{2H-2}\abs{s_1-s_2}^{2H-2} \dif \vec{t} \dif \vec{s}\\ 
&=\lim_{T\to \infty} \frac{8}{(1+ 4\theta T)e^{3\theta T}}\int_{0\le t_2\le T,\,0\le s_2\le s_1\le T } e^{\theta( t_2+s_1+s_2)}(T-t_2)^{2H-2}(s_1-s_2)^{2H-2} \dif t _2\dif \vec{s}.
\end{align*}
We divide the domain $\set{0\le t_2\le T,\,0\le s_2\le s_1\le T,\, s_1\le T}$ into three disjoint regions according
to the distinct orders of $s1,\, s2,\, t2$: $$\Delta_1=\set{0\le s_2\le s_1\le t_2\le T},\,\Delta_2=\set{0\le s_2\le t_2\le s_1\le T},\, \Delta_3=\set{0\le t_2\le s_2\le  s_1\le T}.$$
We also denote $I_i=\int_{\Delta_i}e^{\theta( t_2+s_1+s_2 -3T)}(T-t_2)^{2H-2}(s_1-s_2)^{2H-2} \dif t_2 \dif \vec{s} $. Thus, we have that
\begin{equation}
\lim_{T\to \infty}\frac{1}{\alpha_H^2{T}} \norm{h_T}_{\mathfrak{H}^{\otimes 2}}^2=\lim_{T\to \infty} \frac{8}{1+ 4\theta T}(I_1+I_2+I_3).
\end{equation}

By making the change of variables $T-t_2=x, t_2-s_1=y,\,s_1-s_2=z$, we have that
\begin{align*}
I_1&=\int_{\Rnum_{+}^3,\, x+y+z\le T} e^{-\theta (3x+2y+z)} x^{2H-2}z^{2H-2}\dif x\dif y \dif z\\
&<\int_{\Rnum_{+}^3} e^{-\theta (3x+2y+z)} x^{2H-2}z^{2H-2}\dif x\dif y \dif z <\infty.
\end{align*}
Similarly, we can show that $I_2,\,I_3<\infty $, which implies that  $\frac{1}{ {T}} \norm{h_T}_{\mathfrak{H}^{\otimes 2}}^2\to 0 $ as $T\to \infty$.
\end{proof}

\begin{lem}\label{gt gt}
Let $g_T$ be given as in (\ref{gt ts}). When $H\in [\frac12,\,\frac34)$,  we have that as $T\to \infty$,
\begin{align}
T\norm{g_T}_{\mathfrak{H}^{\otimes 2}}^2&\to \frac{\delta_H}{2\theta ^{1+4H}},\quad
T \innp{f_T,\,g_T}_{\mathfrak{H}^{\otimes 2}}^2\to \frac{\delta_H^2}{4\theta^{1+8H} \sigma_H^2},\\
T \norm{f_T\otimes_{1}g_T}_{\mathfrak{H}^{\otimes 2}}^2&\to 0,\quad
T\norm{g_T\otimes_{1}g_T}_{\mathfrak{H}^{\otimes 2}}^2\to 0;
\end{align}
when $H=\frac34$, we have that 
\begin{align*}
\frac{T}{\log T}\norm{g_T}_{\mathfrak{H}^{\otimes 2}}^2&\to \frac{\delta_H}{2\theta ^{1+4H}},\quad
\frac{T}{\log ^2T} \innp{f_T,\,g_T}_{\mathfrak{H}^{\otimes 2}}^2\to \frac{\delta_H^2}{4\theta^{1+8H} \sigma_H^2},\\
\frac{T}{\log T}\norm{f_T\otimes_{1}g_T}_{\mathfrak{H}^{\otimes 2}}^2&\to 0,\quad
\frac{T}{\log T}\norm{g_T\otimes_{1}g_T}_{\mathfrak{H}^{\otimes 2}}^2\to 0,
\end{align*}
 where $\delta_H$ is given in \cite{hn}:
\begin{equation*}
\delta_H=\left\{
      \begin{array}{ll}
H^2(4H-1)\big( \Gamma^2(2H)+\frac{\Gamma(2H)\Gamma(3-4H)\Gamma(4H-1)}{\Gamma(2-2H)} \big),&\quad H\in [\frac12,\,\frac34),\\
\frac{9}{16},& \quad H=\frac34.
  \end{array}
\right.
\end{equation*}
\end{lem}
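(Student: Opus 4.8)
The plan is to use the explicit linear structure $g_T = c_1 f_T + c_2 h_T$, where $c_1 = \sqrt{\sigma^2_H/(\theta T)}$ and $c_2 = -1/(2\theta T)$, and to expand each quantity in the statement bilinearly in $f_T$ and $h_T$. All four assertions then reduce to the behaviour of the scalars $\norm{f_T}_{\mathfrak{H}^{\otimes 2}}^2$, $\innp{f_T,\,h_T}_{\mathfrak{H}^{\otimes 2}}$, $\norm{h_T}_{\mathfrak{H}^{\otimes 2}}^2$ and of the contraction norms $\norm{f_T\otimes_1 f_T}_{\mathfrak{H}^{\otimes 2}}$, $\norm{f_T\otimes_1 h_T}_{\mathfrak{H}^{\otimes 2}}$, $\norm{h_T\otimes_1 h_T}_{\mathfrak{H}^{\otimes 2}}$. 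Since $c_1^2 = O(1/T)$, $c_1 c_2 = O(T^{-3/2})$ and $c_2^2 = O(1/T^2)$, the powers of $T$ single out, in every expression, the term built from $f_T$ alone, all other terms being of strictly lower order.

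For the norm and inner-product limits I would first record that $\norm{f_T}_{\mathfrak{H}^{\otimes 2}}^2 \to \tfrac12 b_\infty^2$ for $H\in[\frac12,\frac34)$ and $\norm{f_T}_{\mathfrak{H}^{\otimes 2}}^2 \sim \tfrac12 b_\infty^2\log T$ for $H=\frac34$, where $b_\infty := H\Gamma(2H)\theta^{-2H}$ is the limit of $b_T$ from Lemma~\ref{bt exponential}; this is precisely the variance computation defining $\sigma^2_H$ in \cite{hn,hnz}. Because $c_1^2 T = \sigma^2_H/\theta$, the leading term of $T\norm{g_T}_{\mathfrak{H}^{\otimes 2}}^2$ is $(\sigma^2_H/\theta)\norm{f_T}_{\mathfrak{H}^{\otimes 2}}^2$, while the cross term $2Tc_1c_2\innp{f_T,h_T}_{\mathfrak{H}^{\otimes 2}}$ is $O(T^{-1/2})$ and the pure-$h_T$ term $Tc_2^2\norm{h_T}_{\mathfrak{H}^{\otimes 2}}^2$ is $O(T^{-1})$ — here I use $\norm{h_T}_{\mathfrak{H}^{\otimes 2}}^2 = O(1)$ (Lemma~\ref{ht limit}) together with $\abs{\innp{f_T,h_T}_{\mathfrak{H}^{\otimes 2}}}\le \norm{f_T}_{\mathfrak{H}^{\otimes 2}}\norm{h_T}_{\mathfrak{H}^{\otimes 2}}$. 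The limit is thus $\frac{\sigma^2_H b_\infty^2}{2\theta}$, which equals the claimed $\frac{\delta_H}{2\theta^{1+4H}}$ by the algebraic identity $\delta_H = H^2\Gamma^2(2H)\,\sigma^2_H = b_\infty^2\,\theta^{4H}\,\sigma^2_H$, obtained by factoring $\Gamma(2H)$ out of the definition of $\delta_H$. For the inner product, $\innp{f_T,g_T}_{\mathfrak{H}^{\otimes 2}} = c_1\norm{f_T}_{\mathfrak{H}^{\otimes 2}}^2 + c_2\innp{f_T,h_T}_{\mathfrak{H}^{\otimes 2}}$ has leading part $c_1\norm{f_T}_{\mathfrak{H}^{\otimes 2}}^2$; squaring and multiplying by $T$ (respectively $T/\log^2 T$ when $H=\frac34$) gives $\frac{\sigma^2_H b_\infty^4}{4\theta} = \frac{\delta_H^2}{4\theta^{1+8H}\sigma_H^2}$ after the same identity. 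The different powers of $\log T$ in the $H=\frac34$ normalizations reflect that $\norm{g_T}^2$ is linear and $\innp{f_T,g_T}^2$ is quadratic in the logarithmically divergent $\norm{f_T}^2$.

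For the contraction limits I would use $f_T\otimes_1 g_T = c_1(f_T\otimes_1 f_T) + c_2(f_T\otimes_1 h_T)$ and $g_T\otimes_1 g_T = c_1^2(f_T\otimes_1 f_T) + c_1c_2(f_T\otimes_1 h_T + h_T\otimes_1 f_T) + c_2^2(h_T\otimes_1 h_T)$. The terms built on $f_T\otimes_1 f_T$ are controlled by Lemma~\ref{zhou}: for $H<\frac34$ that norm tends to $0$, so $2Tc_1^2\norm{f_T\otimes_1 f_T}^2 = \frac{2\sigma^2_H}{\theta}\norm{f_T\otimes_1 f_T}^2\to 0$, whereas at $H=\frac34$ Lemma~\ref{zhou} only yields boundedness but the extra $1/\log T$ again forces the product to vanish. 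For the remaining terms I would exploit that $h_T = \phi_T\otimes\phi_T$ is rank one, with $\phi_T(\cdot) = e^{-\theta(T-\cdot)}\mathbf{1}_{[0,T]}(\cdot)$; note that $\norm{\phi_T}_{\mathfrak{H}}^2$ is exactly the integrand $\norm{e^{-\theta(t-\cdot)}\mathbf{1}_{[0,t]}}_{\mathfrak{H}}^2$ of $b_T$ evaluated at $t=T$, which converges as $T\to\infty$ (the relevant integral being finite for $H>\frac12$) and hence is bounded. This gives $h_T\otimes_1 h_T = \norm{\phi_T}_{\mathfrak{H}}^2\,h_T$ and $f_T\otimes_1 h_T = \psi_T\otimes\phi_T$ with $\psi_T(t) = \innp{f_T(t,\cdot),\,\phi_T}_{\mathfrak{H}}$, so that $\norm{h_T\otimes_1 h_T}_{\mathfrak{H}^{\otimes 2}} = \norm{\phi_T}_{\mathfrak{H}}^4 = O(1)$ and, using the explicit factor $T^{-1/2}$ inside $f_T$, $\norm{f_T\otimes_1 h_T}_{\mathfrak{H}^{\otimes 2}} = O(T^{-1/2})$. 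Feeding these into the powers of $c_1,c_2$ shows every mixed or pure-$h_T$ contribution is $O(T^{-2})$ or smaller, so both $T\norm{f_T\otimes_1 g_T}^2$ and $T\norm{g_T\otimes_1 g_T}^2$ (and their $T/\log T$ analogues) tend to $0$.

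The main obstacle is the sharp asymptotics of $\norm{f_T}_{\mathfrak{H}^{\otimes 2}}^2$, namely the evaluation of the fourfold integral $\alpha_H^2\int_{[0,T]^4}e^{-\theta\abs{t_1-s_1}-\theta\abs{t_2-s_2}}\abs{t_1-t_2}^{2H-2}\abs{s_1-s_2}^{2H-2}\,\dif\vec{t}\,\dif\vec{s}$ and the identification of its limiting constant (equivalently, of $\lim 2\norm{f_T}^2$) with $b_\infty^2$. This is where the Gamma-function bookkeeping is concentrated and where the endpoint $H=\frac34$ must be treated on its own, since the integral then diverges at the logarithmic rate $\log T$, which is the source of the $\log T$ factors in the normalizations. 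Once this leading asymptotics and the $O(1)$ bound on $\norm{h_T}^2$ are in place, the rest is the order-counting sketched above, the only delicate point being to keep the $H=\frac34$ normalizations $T/\log T$ and $T/\log^2 T$ consistent across the four statements.
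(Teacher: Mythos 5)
Your proposal is correct and follows essentially the paper's route: the same bilinear expansion of $g_T=c_1f_T+c_2h_T$, the same inputs (the asymptotics of $\norm{f_T}_{\mathfrak{H}^{\otimes 2}}^2$ from \cite{hn}, i.e.\ Lemma~\ref{2ft2}, and Lemma~\ref{zhou} for $f_T\otimes_1 f_T$), and Cauchy--Schwarz to kill every term carrying $h_T$ or an extra $T^{-1/2}$; your identity $\delta_H=H^2\Gamma^2(2H)\sigma_H^2$ is a correct way to match the limiting constants. The execution differs in three harmless ways worth recording. First, for the $h_T$-terms the paper only uses Lemma~\ref{ht limit}, i.e.\ $\norm{h_T}_{\mathfrak{H}^{\otimes 2}}=o(\sqrt T)$, whereas you use the rank-one factorization $h_T=\phi_T\otimes\phi_T$ to get the stronger $\norm{h_T}_{\mathfrak{H}^{\otimes 2}}=\norm{\phi_T}_{\mathfrak{H}}^2=O(1)$; this is valid, but note it does \emph{not} follow from Lemma~\ref{ht limit}, which you cite for it --- your own rank-one argument is what proves it. Second, for $g_T\otimes_1 g_T$ the paper bypasses your four-term expansion with the one-line bound $\norm{g_T\otimes_1 g_T}_{\mathfrak{H}^{\otimes 2}}\le\norm{g_T}_{\mathfrak{H}^{\otimes 2}}^2$, so that $\sqrt T\,\norm{g_T\otimes_1 g_T}_{\mathfrak{H}^{\otimes 2}}\le T^{-1/2}\cdot T\norm{g_T}_{\mathfrak{H}^{\otimes 2}}^2\to 0$. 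Third, your intermediate claim $\norm{f_T\otimes_1 h_T}_{\mathfrak{H}^{\otimes 2}}=O(T^{-1/2})$ is not justified by ``the factor $T^{-1/2}$ inside $f_T$'' and appears to be false for $H$ close to $\frac34$: writing $f_T=T^{-1/2}\tilde f_T$, the function $t\mapsto\innp{\tilde f_T(t,\cdot),\,\phi_T}_{\mathfrak{H}}$ decays like $(T-t)^{2H-2}$ away from $t=T$, and its $\mathfrak{H}$-norm grows on the order of $T^{3H-2}$ once $H>\frac23$, so the true order is about $T^{3H-\frac52}$ (near $T^{-1/4}$ as $H\uparrow\frac34$). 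This does not damage the lemma, because everywhere this quantity enters it is multiplied by $c_2^2T=O(T^{-1})$ (or by $c_1^2c_2^2T$), so the crude contraction inequality $\norm{f_T\otimes_1 h_T}_{\mathfrak{H}^{\otimes 2}}\le\norm{f_T}_{\mathfrak{H}^{\otimes 2}}\norm{h_T}_{\mathfrak{H}^{\otimes 2}}$, which is $O(1)$ for $H<\frac34$ and $O(\sqrt{\log T})$ at $H=\frac34$, already closes all four statements; you should simply replace the $O(T^{-1/2})$ claim by this bound.
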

\begin{proof} We only show the case of $H\in[\frac12,\,\frac34)$. The case of $H=\frac34$ is similar.

It follows from (\ref{gt ts}) that 
\begin{align*}
T\norm{g_T}_{\mathfrak{H}^{\otimes 2}}^2&=\frac{\sigma^2_H}{\theta}\norm{f_T}^2_{\mathfrak{H}^{\otimes 2}}+\frac{1}{4\theta^2T} \norm{h_T}^2_{\mathfrak{H}^{\otimes 2}}-\sqrt{\frac{\sigma^2_H}{\theta^3 T}}\innp{f_T,\,h_T}_{\mathfrak{H}^{\otimes 2}}.
\end{align*} The Cauchy-Schwarz inequality implies that the third term is bounded by $\frac{c}{\sqrt{T}} \norm{f_T}\cdot\norm{h_T}$.
By Lemma~\ref{ht limit} and Eq.(3.12)-(3.14) of \cite{hn}, we have that 
\begin{align*}
\lim_{T\to \infty}T\norm{g_T}_{\mathfrak{H}^{\otimes 2}}^2&= \frac{\sigma^2_H}{\theta} \lim_{T\to \infty}\norm{f_T}^2_{\mathfrak{H}^{\otimes 2}}= \frac{\delta_H}{2\theta ^{1+4H}}.
\end{align*}

Similarly, we have that 
\begin{align*}
\lim_{T\to \infty}\sqrt{T}\innp{f_T,\,g_T}_{\mathfrak{H}^{\otimes 2}}&= \sqrt{\frac{\sigma^2_H}{\theta}}\lim_{T\to \infty}\norm{f_T}^2_{\mathfrak{H}^{\otimes 2}}= \sqrt{\frac{\theta}{\sigma^2_H}} \frac{\delta_H}{2\theta ^{1+4H}}.
\end{align*}

Next,  it is clear that 
\begin{align*}
\sqrt{T}f_T\otimes_{1}g_T=\sqrt{\frac{\sigma^2_H}{\theta}}f_T\otimes_{1}f_T -\frac{1}{2\theta} f_T\otimes_{1}( \frac{1}{\sqrt{T}}h_T).
\end{align*}
The fourth moment theorem implies that $f_T\otimes_{1}f_T\to 0$ in $\mathfrak{H}^{\otimes 2}$ as $T\to \infty$, please refer to \cite{hn, hnz} for details. The Cauchy-Schwarz inequality (or Lemma 4.2 of \cite{bie}) and Lemma~\ref{ht limit} imply that as $T\to \infty$,
\begin{align*}
\norm{f_T\otimes_{1}( \frac{1}{\sqrt{T}}h_T)}_{\mathfrak{H}^{\otimes 2}}\le \norm{f_T}_{\mathfrak{H}^{\otimes 2}}\cdot \frac{1}{\sqrt{T}}\norm{h_T}_{\mathfrak{H}^{\otimes 2}}\to 0,
\end{align*} which implies that $\sqrt{T}f_T\otimes_{1}g_T\to 0$ in $\mathfrak{H}^{\otimes 2}$. 

Finally, the Cauchy-Schwarz inequality or Lemma 4.2 of \cite{bie} implies that
\begin{align*}
\sqrt{T}\norm{g_T\otimes_{1}g_T}_{\mathfrak{H}^{\otimes 2}}\le\sqrt{T} \norm{g_T}^2_{\mathfrak{H}^{\otimes 2}}=\frac{1}{\sqrt T} \cdot T\norm{g_T}^2_{\mathfrak{H}^{\otimes 2}}\to 0.
\end{align*}
\end{proof}

\begin{lem}\label{2ft2}
When $H\in [\frac12,\,\frac{3}{4})$,  the convergence speed of $2\norm{f_T}^2_{\mathfrak{H}^{\otimes 2}}\to \big[H\Gamma(2H)\theta^{-2H}\big]^2$ is $\frac{1}{T^{3-4H}}$ as $T\to \infty$. When $H=\frac34$, the convergence speed of $\frac{2\norm{f_T}^2_{\mathfrak{H}^{\otimes 2}}}{\log T}\to \frac{9\pi}{64\theta^3}$ is $1/\log T$  as $T\to \infty$.  
\end{lem}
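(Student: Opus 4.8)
The plan is to reduce $\norm{f_T}^2_{\mathfrak{H}^{\otimes 2}}$ to a single explicit multiple integral and then to \emph{track}, rather than merely pass to, the limit already identified in Lemma~\ref{gt gt} and in Eq.(3.12)-(3.14) of \cite{hn}. Using the explicit form of $f_T$ and the inner product on $\mathfrak{H}^{\otimes 2}$, I would first write
\begin{equation*}
2\norm{f_T}^2_{\mathfrak{H}^{\otimes 2}}=\frac{\alpha_H^2}{2\theta\sigma^2_H\,T}\,A_T,
\end{equation*}
with $A_T=\int_{[0,T]^4}e^{-\theta\abs{t_1-t_2}}e^{-\theta\abs{s_1-s_2}}\abs{t_1-s_1}^{2H-2}\abs{t_2-s_2}^{2H-2}\,\dif t_1\,\dif t_2\,\dif s_1\,\dif s_2$. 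Writing $\mu=H\Gamma(2H)\theta^{-2H}$ for the limit of $b_T$ (Lemma~\ref{bt exponential}), the target is the rate at which $\frac{\alpha_H^2}{2\theta\sigma^2_H T}A_T\to\mu^2$, resp. the rate of $\frac{1}{\log T}\frac{\alpha_H^2}{2\theta\sigma^2_H T}A_T\to\mu^2$ when $H=\frac34$ (where $\mu^2=\frac{9\pi}{64\theta^3}$ since $\mu=\frac34\Gamma(\frac32)\theta^{-3/2}$). The value of the limit itself is exactly the content of \cite{hn}, so only the remainder is new.

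Next I would make the translation structure explicit. Setting $t_2=t_1-\xi$, $s_2=s_1-\eta$ and $r=t_1-s_1$, the integration in the remaining ``center'' variable produces a factor comparable to $T$ that cancels the prefactor $1/T$, leaving (up to a boundary defect discussed below) a single integral $\int_{\abs{r}\le T}G(r)\,\dif r$, where $G(r)=\abs{r}^{2H-2}\int_{\Rnum^2}e^{-\theta\abs{\xi}-\theta\abs{\eta}}\abs{r+\xi-\eta}^{2H-2}\,\dif\xi\,\dif\eta$. The key point is that $G$ is bounded near $r=0$ but $G(r)\sim\frac{4}{\theta^2}\abs{r}^{4H-4}$ as $\abs{r}\to\infty$, so $G$ carries exactly the power-law tail responsible for the rate. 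I would record two sources of error in passing from $A_T/T$ to the full-line constant $\int_{\Rnum}G(r)\,\dif r$: (i) replacing the finite window $[0,T]$ and the OU kernel started at $0$ by their stationary full-line versions, which costs only an exponentially small term because of the factors $e^{-\theta\abs{t_1-t_2}}$, $e^{-\theta\abs{s_1-s_2}}$ and the transient decay; and (ii) the genuinely polynomial defect from the finite range of $r$.

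The rate is then read off from (ii). For $H\in[\frac12,\frac34)$, writing the center-of-mass integral as $\int_{-T}^{T}(T-\abs{r})\,\dif r$ leads to
\begin{equation*}
\tfrac1T A_T-\int_{\Rnum}G(r)\,\dif r=-\int_{\abs{r}>T}G(r)\,\dif r-\tfrac1T\int_{-T}^{T}\abs{r}\,G(r)\,\dif r+(\text{exp. small}).
\end{equation*}
Both terms on the right are $\asymp\int_T^{\infty}r^{4H-4}\,\dif r\asymp T^{4H-3}$ (at $H=\frac12$ the tail is exponentially small and the linear-weight term supplies the $1/T=T^{4H-3}$ rate), which is precisely $T^{-(3-4H)}$; matching the constant against $\mu^2$ via \cite{hn} closes this case. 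For $H=\frac34$ one has $G(r)\asymp\abs{r}^{-1}$, so $\int_{\abs{r}\le T}G\asymp\log T$ is the leading term while $\frac1T\int_{-T}^{T}\abs{r}\,G(r)\,\dif r=O(1)$; hence the $\log T$-normalized quantity converges with a remainder of relative size $O(1/\log T)$ and coefficient $\frac{9\pi}{64\theta^3}$.

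The main obstacle is the bookkeeping in the reduction above: one must verify that the exponentially small boundary and transient corrections in (i) are genuinely negligible against the polynomial $T^{4H-3}$, and that no intermediate term decays more slowly. The delicate region is the near-diagonal $r\approx0$, where the factors $\abs{t_1-s_1}^{2H-2}$ and $\abs{t_2-s_2}^{2H-2}$ are simultaneously singular and the naive ``collapse'' $\abs{t_1-s_1}^{2H-2}\abs{t_2-s_2}^{2H-2}\approx\abs{r}^{4H-4}$ (non-integrable for $H<\frac34$) fails; keeping the regularized kernel $G$ is exactly what makes $\int_{\Rnum}G$ finite and isolates the tail as the only slow error. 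The critical case $H=\frac34$, where $G$ ceases to be integrable and a logarithm appears, must be handled separately as above.
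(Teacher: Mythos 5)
Your proposal is correct in outline, and it takes a genuinely different route from the paper. The paper's proof applies L'Hospital's rule in $T$ to the normalized difference $T^{3-4H}\big\{[H\Gamma(2H)\theta^{-2H}]^2-2\norm{f_T}^2_{\mathfrak{H}^{\otimes 2}}\big\}$, which collapses the four-fold integral over $[0,T]^4$ to a three-fold one, changes variables to $x=T-s_2$, $y=T-s_1$, $z=T-t_1$, and then splits the defect set $\Rnum_{+}^3\setminus[0,T]^3$ into six ordering regions, computing $\lim_{T}T^{3-4H}I_i$ region by region (with a separate, analogous computation producing the logarithms at $H=\frac34$). You instead use the stationary/Fej\'{e}r-kernel reduction familiar from long-range-dependence asymptotics: integrate out the centre variable so that $A_T/T$ becomes $\int_{-T}^{T}(1-\abs{r}/T)\,G(r)\,\dif r$ up to window corrections, extract the tail $G(r)\sim\frac{4}{\theta^2}\abs{r}^{4H-4}$, and read the rate off the two deficits $\int_{\abs{r}>T}G(r)\,\dif r$ and $\frac1T\int_{-T}^{T}\abs{r}G(r)\,\dif r$, both of order $T^{4H-3}$ and of the same sign, so no cancellation can improve the rate; at $H=\frac34$ the non-integrable tail $\abs{r}^{-1}$ produces the $\log T$ normalization and the $O(1)$ remainders give the $1/\log T$ rate. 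Your route makes the source of the rate (the power-law tail of the correlation kernel) more transparent, and it reproduces the paper's constants: your two deficits sum to $\lim_T T^{3-4H}\{\mu^2-2\norm{f_T}^2_{\mathfrak{H}^{\otimes 2}}\}=\frac{4\alpha_H^2}{\theta^3\sigma_H^2(3-4H)(4H-2)}$, which is exactly what the paper's six-region computation yields, and your value $\frac{9\pi}{64\theta^3}$ at $H=\frac34$ is also correct.

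Two of your assertions need repair; one is cosmetic, one is material. Cosmetic: $G$ is \emph{not} bounded near $r=0$ for $H\in(\frac12,\frac34)$ --- it blows up like $\abs{r}^{2H-2}$ --- but it is integrable there since $2H-2>-1$, which is all the argument uses. Material: the window/transient correction in your step (i) is \emph{not} exponentially small in $T$. Extending the $(\xi,\eta)$-integration from $[t_1-T,t_1]\times[s_1-T,s_1]$ to $\Rnum^2$ costs pointwise $O\big(e^{-\theta t_1}+e^{-\theta(T-t_1)}+e^{-\theta s_1}+e^{-\theta(T-s_1)}\big)$, uniformly in $r$; but this must then be integrated against $\abs{t_1-s_1}^{2H-2}$ over $[0,T]^2$, and since $\int_0^T\abs{t_1-s_1}^{2H-2}\dif s_1\asymp T^{2H-1}$, the total cost to $A_T$ is genuinely of order $T^{2H-1}$ (the boundary layer $t_1=O(1)$ alone contributes this much), i.e.\ of order $T^{2H-2}$ after dividing by $T$. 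The proof survives only because $T^{2H-2}=o(T^{4H-3})$ precisely when $H>\frac12$; this comparison is exactly the bookkeeping you deferred, and it is not the free exponential estimate you claimed --- had the exponent inequality gone the other way, the stated rate would have failed. Finally, at $H=\frac12$ your kernel representation degenerates ($\alpha_{1/2}=0$), so that endpoint must be handled directly with the $L^2([0,T]^2)$ inner product, as the paper does in one line.
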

\begin{proof} The case of $H=\frac12$ is easy.

Next, suppose that $H\in (\frac12,\,\frac34)$.
By the symmetrical property, the L'Hospital's rule and Lemma 5.3 in the web-only Appendix of \cite{hn}, we have that as $T\to \infty$,
\begin{align*}
&\lim_{T\to \infty} {T^{3-4H}}\Big  \{-2\norm{f_T}^2_{\mathfrak{H}^{\otimes 2}}+ \big[H\Gamma(2H)\theta^{-2H}\big]^2\Big\}\times \frac{\theta \sigma^2_H}{2 \alpha_H^2}\times(4H-2)\\
&=\lim_{T\to \infty} \frac{4H-2}{ 4T^{4H-2}}\Big[-
\int_{[0,T]^4}e^{-\theta\abs{t_1-s_1}-\theta\abs{t_2-s_2}}\abs{t_1-t_2}^{2H-2} \abs{s_1-s_2}^{2H-2}\dif \vec{t}\dif \vec{s}+\frac{2\theta^{1-4H}\delta_H}{\alpha_H^2}T \Big]\\
&=\lim_{T\to \infty}  { T^{3-4H}} \Big[-\int_{[0,T]^3}e^{-\theta\abs{t_1-s_1}-\theta(T-s_2)}(T-t_1)^{2H-2} \abs{s_1-s_2}^{2H-2}\dif t_1\dif \vec{s}+\frac{\theta^{1-4H}\delta_H}{2\alpha_H^2} \Big]\\
&\quad (\text{let} \quad x=T-s_2,\,y=T-s_1,\, z=T-t_1)\\
&=\lim_{T\to \infty}  { T^{3-4H}}\Big[-\int_{[0,T]^3}e^{-\theta(x+\abs{y-z})}z^{2H-2} \abs{x-y}^{2H-2}\dif x\dif y\dif z+\frac{\theta^{1-4H}\delta_H}{2\alpha_H^2} \Big]\\
&=\lim_{T\to \infty}  { T^{3-4H}}\int_{\Rnum_{+}^3-[0,T]^3}e^{-\theta(x+\abs{y-z})}z^{2H-2} \abs{x-y}^{2H-2}\dif x\dif y\dif z\\
&:=\sum_{i=1}^6\lim_{T\to \infty}  { T^{3-4H }} I_i,
\end{align*} 
where for $\,i=1,\dots, 6$, 
\begin{align*}
I_i&= \int_{\Delta_i^c}e^{-\theta(x+\abs{y-z})}z^{2H-2} \abs{x-y}^{2H-2}\dif x\dif y\dif z,\\
\Delta_i^{c}&=\lim_{T\to \infty}\Delta_i(T) -\Delta_i(T),\\
\Delta_1(T)&=\set{0\le x\le y\le z\le T},\,\Delta_2(T)=\set{0\le x\le z\le y\le T},\, \Delta_3(T)=\set{0\le z\le x\le y\le T},\\
\Delta_4(T)&=\set{0\le y\le x\le z\le T},\,\Delta_5(T)=\set{0\le y\le z\le x\le T},\, \Delta_6(T)=\set{0\le z\le y\le  x\le T}.
\end{align*} 

By making the change of variables $a=x,\,b=y-x,\,c=z-y$, we have that 
\begin{align*}
I_1&=  \int_{\Rnum_{+}^3,\,a+b+c>T}e^{-\theta(a+c)}b^{2H-2} (a+b+c)^{2H-2}\dif a\dif b\dif c.
\end{align*}
Since on $\set{(a,b,c)\in \Rnum_{+}^3,\,a+b+c>T}$, we have that
\begin{align*}
\set{a+b+c>T,\, b\ge 1}&=\set{1\le b\le T,\,a+c> T-b}\cup\set{b>T },\\
\set{a+b+c>T,\, 0<b< 1}&\subset \set{0<b< 1,\,a+c>T-1},\\
(a+b+c)b&\ge b^2\mathbf{1}_{\set{b\ge 1}}+(a+c)b\mathbf{1}_{\set{0< b<1}}.
\end{align*}
Hence,
\begin{align*}
{ T^{3-4H }} I_{1}&={ T^{3-4H }}[ I_{11}+I_{12}+I_{13}],
\end{align*}where
\begin{align*}
I_{11}&= \int_{1}^Tb^{2H-2}   \dif b \int_{a+c>T-b}e^{-\theta(a+c)}(a+b+c)^{2H-2}\dif a \dif c \\
I_{12}&= \int_{T}^{\infty}b^{2H-2}  \dif b \int_{\Rnum_{+}^2}e^{-\theta(a+c)} (a+b+c)^{2H-2}\dif a\dif c,\\
I_{13}&=\int_{0}^1b^{2H-2}  \dif b \int_{a+c>T-1}e^{-\theta(a+c)}(a+b+c)^{2H-2}\dif a\dif c\\
&< \int_{0}^1b^{2H-2}  \dif b \int_{a+c>T-1}e^{-\theta(a+c)}(a+c)^{2H-2}\dif a\dif c.
\end{align*}
By the L'Hospital's rule and  Lebesgue's dominated convergence theorem,  we have that as $T\to \infty$
\begin{align*}
 T^{3-4H}I_{11}\to 0,\qquad T^{3-4H}I_{12} \to \frac{1}{(3-4H)\theta^2} ,\qquad  T^{3-4H}I_{13}\to 0
\end{align*}which implies that 
\begin{align*}
T^{3-4H}I_{1}\to \frac{1}{(3-4H)\theta^2}.
\end{align*}
In the same way, we have that as $T\to \infty$,
\begin{align*}
T^{3-4H}I_{2}&\to\frac{1}{(3-4H)\theta^2},\qquad T^{3-4H}I_{4}\to 0, \qquad T^{3-4H}I_{3}= T^{3-4H}I_{6}\to 0.
\end{align*}In addition, it is clear that $ 0\le I_5\le I_3$. Hence, 
\begin{align*}
\sum_{i=1}^6\lim_{T\to \infty}  { T^{3-4H }} I_i=  \frac{2}{(3-4H)\theta^2},
\end{align*} which implies
 the convergence speed of $2\norm{f_T}^2_{\mathfrak{H}^{\otimes 2}}\to \big[H\Gamma(2H)\theta^{-2H}\big]^2$ is $\frac{1}{T^{3-4H}}$ as $T\to \infty$.

Finally,  suppose that $H=\frac34$. Using L'Hospital's rule and the symmetry, we have that  
\begin{align*}
&\lim_{T\to \infty}  \log T \Big\{ \frac{2\norm{f_T}^2_{\mathfrak{H}^{\otimes 2}}}{\log T}- \big[H\Gamma(2H)\theta^{-2H}\big]^2\Big \}\times \frac{\theta \sigma^2_H}{2 \alpha_H^2}\\
&=\lim_{T\to \infty}   \frac{1}{T }\Big[\frac14\int_{[0,T]^4}e^{-\theta\abs{t_1-s_1}-\theta\abs{t_2-s_2}}\abs{t_2-t_1}^{-\frac12} \abs{s_1-s_2}^{-\frac12}\dif\vec{ t}\dif \vec{s}-\frac{2}{\theta^2} T\log T] \\
&=-\frac{2}{\theta^2} + \lim_{T\to \infty}  \Big[ \int_{[0,T]^3}e^{-\theta\abs{t_1-s_1}-\theta(T-s_2)}(T-t_1)^{-\frac12} \abs{s_1-s_2}^{-\frac12}\dif { t}_1\dif \vec{s}-\frac{2}{\theta^2} \log T]\\
&\quad (\text{let} \quad x=T-s_2,\,y=T-s_1,\, z=T-t_1)\\
&=-\frac{2}{\theta^2} +\lim_{T\to \infty}  { \log T} \Big[\frac{1}{ \log T}\int_{[0,T]^3}e^{-\theta(x+\abs{y-z})}z^{-\frac12} \abs{x-y}^{-\frac12}\dif x\dif y\dif z-\frac{2}{\theta^2}\Big] \\
&=-\frac{2}{\theta^2}+ \sum_{i=1}^2  \lim_{T\to \infty}  {\log T} (J_i -\frac{1}{\theta^2}) + \sum_{i=3}^6  \lim_{T\to \infty}  J_i {\log T} ,
\end{align*} 
where 
\begin{align*}
J_i=\frac{1}{\log T}\int_{\Delta_i(T)}e^{-\theta(x+\abs{y-z})}z^{-\frac12} \abs{x-y}^{-\frac12}\dif x\dif y\dif z.
\end{align*}

Using a change of variable $u=y-x$, we have that 
\begin{align*}
 \log T (J_1 -\frac{1}{\theta^2}) &=\int_{0\le x\le y\le z\le T}e^{-\theta(x+z-y)}z^{-\frac12} \abs{x-y}^{-\frac12}\dif x\dif y\dif z -\frac{1}{\theta^2}\log T\\
 &=\int_{0\le u\le z\le T}e^{-\theta(z-u)}z^{-\frac12} u^{-\frac12} ( z-u)\dif u\dif z -\frac{1}{\theta^2}\log T\\
 &=\int_{0\le u\le z\le T}e^{-\theta(z-u)}\big(z^{\frac12} u^{-\frac12} -z^{-\frac12} u^{\frac12} \big)\dif u\dif z -\frac{1}{\theta^2}\log T.
 \end{align*}
 The formula of integration by parts implies that 
 \begin{align*}
 \int_{0\le u\le z\le T}e^{-\theta(z-u)}z^{\frac12} u^{-\frac12}\dif u\dif z&= \frac{T}{\theta}- \frac{1}{\theta} \frac{T^{\frac12}}{e^{\theta T}}\int_0^{T}u^{-\frac12}
e^{\theta u} \dif u+ \frac{1}{2\theta}\int_{0\le u\le z\le T}e^{-\theta(z-u)}z^{-\frac12} u^{-\frac12}\dif u\dif z,\\
 \int_{0\le u\le z\le T}e^{-\theta(z-u)}z^{-\frac12} u^{\frac12}\dif u\dif z&= \frac{T}{\theta}- \frac{1}{2\theta}\int_{0\le u\le z\le T}e^{-\theta(z-u)}z^{-\frac12} u^{-\frac12}\dif u\dif z,
\end{align*}
Using a change of variable $u=pz,\,p\in (0,1)$ and the integration by parts, we have that \begin{align*}
\int_{1}^{ T}\dif z\int_0^{z} e^{-\theta(z-u)}z^{-\frac12} u^{-\frac12}\dif u-\frac{1}{\theta}\log T&=\frac{1}{\theta} \int_{1}^{T} e^{-\theta z}z^{-1}\dif z\int_0^1 p\, \dif e^{\theta zp} -\frac{1}{\theta}\log T\\
&=\frac{1}{\theta^2}(1-\frac{1}{T^2}) -\frac{1}{\theta}\int_1^{T}e^{-\theta z}z^{-1}\dif z.
\end{align*}
Hence, we have that 
\begin{align*}
\lim_{T\to \infty}  \log T (J_1 -\frac{1}{\theta^2})= -\frac{1}{\theta^2}+\frac{1}{\theta^3}+\frac{1}{\theta} \int_{0\le u\le z\le 1}e^{-\theta(z-u)}z^{-\frac12} u^{-\frac12}\dif u\dif z-\frac{1}{\theta^2}\int_1^{\infty}e^{-\theta z}z^{-1}\dif z.
\end{align*}
Similarly, we have that 
\begin{align*}
 \lim_{T\to \infty}  {\log T} (J_2 -\frac{1}{\theta^2})&= \lim_{T\to \infty}   \big(\frac{1}{\theta}\int_{0\le u\le z\le T}e^{-\theta(z-u)}z^{-\frac12} u^{-\frac12}\dif u\dif z -\frac{1}{\theta^2}\log T\big)\\
 &= \frac{1}{\theta^3}+\frac{1}{\theta} \int_{0\le u\le z\le 1}e^{-\theta(z-u)}z^{-\frac12} u^{-\frac12}\dif u\dif z-\frac{1}{\theta^2}\int_1^{\infty}e^{-\theta z}z^{-1}\dif z,\\
 \lim_{T\to \infty}  J_i {\log T}&=\frac{{\pi}}{2\theta^2},\quad i=3,4,5,6 .
\end{align*}
Thus, the speed of $\frac{2\norm{f_T}^2_{\mathfrak{H}^{\otimes 2}}}{\log T}\to \frac{9\pi}{64\theta^3}$ is $1/\log T$ as $T\to \infty$.
\end{proof}

\noindent{\it Proof of Theorem~\ref{main thm}.\,} We only show the case of $H\in [\frac12,\,\frac34)$. The case of $H=\frac34$ is similar.
 
It follows from Theorem~\ref{kp},\, Lemma~\ref{bt exponential}  and Eq.(\ref{ratio 1})-(\ref{bt bt}) that  there exists a constant $C_{\theta, H}$ such that for $T$ large enough,
\begin{align}
&\sup_{z\in \Rnum}\abs{P(\sqrt{\frac{T}{\theta \sigma^2_H}} (\hat{\theta}_T-\theta )\le z)-P(Z\le z)}\le  \nonumber \\
& C_{\theta, H}\times\max\set{\abs{b_T^2-2\norm{f_T}^2},\,\norm{f_T\otimes_1 f_T},\, \norm{f_T\otimes_1 g_T},\,\innp{f_T,\,g_T},\,\norm{g_T}^2,\,\norm{g_T\otimes_1 g_T}}.\label{sup le}
\end{align}Denote $a=H\Gamma(2H)\theta^{-2H}$.
Lemma~\ref{bt exponential} and Lemma~\ref{2ft2} imply that  there exists a constant $c$ such that  for $T$ large enough,
\begin{align*}
\abs{b_T^2-2\norm{f_T}^2}\le \abs{b_T^2- a^2}+\abs{2\norm{f_T}^2-a^2}\le c\times \frac{1}{T^{3-4H}}.
\end{align*}
Lemma~\ref{gt gt} implies that  there exists a constant $c$ such that  for $T$ large enough,
\begin{align*}
\norm{f_T\otimes_1 g_T},\,\innp{f_T,\,g_T},\,\norm{g_T\otimes_1 g_T}  \le c\times \frac{1}{\sqrt{T}},\qquad
\norm{g_T}^2\le c\times \frac{1}{{T}}.
\end{align*}

Substituting (\ref{zhou ineq}) and the above inequalities into (\ref{sup le}), we obtain the desired Berry-Ess\'{e}en bound (\ref{hle 34}).  
{\hfill\large{$\Box$}}\\

\vskip 0.2cm {\small {\bf  Acknowledgements}:
We would like to gratefully thank the referee for very valuable suggestions which lead to the improvement of the new version. 
 Y. Chen is supported by NSFC (No.11871079).
}



\end{document}